\documentclass[11pt,letterpaper,reqno]{amsart}

\usepackage[T1]{fontenc}
\usepackage{amsmath,amssymb,amsthm,amsfonts,mathtools}
\usepackage{enumitem}
\usepackage{hyperref}
\hypersetup{colorlinks=true,linkcolor=blue,citecolor=blue,urlcolor=blue}

\newtheorem{theorem}{Theorem}[section]
\newtheorem{lemma}[theorem]{Lemma}

\newtheorem{corollary}[theorem]{Corollary}
\newtheorem{problem}[theorem]{Problem}
\newtheorem{conjecture}[theorem]{Conjecture}
\theoremstyle{definition}

\newtheorem{remark}[theorem]{Remark}
\numberwithin{equation}{section}

\newcommand{\tr}{\operatorname{tr}}
\newcommand{\rank}{\operatorname{rank}}
\newcommand{\1}{\mathbf{1}}
\begin{document}

\title[The third adjacency eigenvalue]{A sharp upper bound on the third adjacency eigenvalue of a graph}

\author[Q.~Tang]{Quanyu Tang}
\address{School of Mathematics and Statistics, Xi'an Jiaotong University, Xi'an 710049, P.~R.~China}
\email{tang\_quanyu@163.com}

\subjclass[2020]{Primary 05C50; Secondary 15A18, 15A42}

\keywords{adjacency eigenvalues, third eigenvalue, spectral graph theory, symmetric matrices}

\begin{abstract}
For a graph $G$ of order $n$, let
$$
\lambda_1(G)\ge \cdots \ge \lambda_n(G)
$$
be the eigenvalues of its adjacency matrix. We prove that every graph $G$ on $n\ge 3$ vertices satisfies
$$
\lambda_3(G)\le \frac{n}{3}-1,
$$
thereby solving a problem of Nikiforov. The bound is best possible whenever $3\mid n$. Our proof is derived from a more general matrix result: if $A=(a_{ij})$ is a real symmetric matrix of order $n$ with $0\le a_{ij}\le 1$ for all off-diagonal entries and $a_{ii}\ge 0$ for all $i$, then
$$
\lambda_{n-1}(A)+\lambda_n(A)\ge -\frac{2n}{3}.
$$
This in particular confirms a conjecture of Leonida and Li.
\end{abstract}

\maketitle

\section{Introduction}

Throughout, all graphs are finite, simple, and undirected. For a graph $G$ of order $n$, let $A(G)$ denote its adjacency matrix and let
\[
\lambda_1(G)\ge \cdots \ge \lambda_n(G)
\]
be the eigenvalues of $A(G)$. Order-only bounds for adjacency eigenvalues have a long history; see, for example,
Hong~\cite{Hong1988,Hong1993}, Powers~\cite{Powers1989}, and Nikiforov~\cite{Nikiforov2015}.

In~\cite{Powers1989}, Powers proposed, for connected graphs, the order-only bound
\[
\lambda_i(G)\le \Bigl\lfloor \frac{n}{i}\Bigr\rfloor
\qquad (1\le i\le n/2).
\]
As pointed out later by Nikiforov~\cite{Nikiforov2015}, the original proof is flawed. Nevertheless, the bound is valid for $i\le 2$: the case $i=1$ is trivial, and Hong proved the stronger estimate \(\lambda_2(G)\le (n-2)/2\) for every graph of order $n$~\cite{Hong1988}. By contrast, Nikiforov's constructions show that Powers's bound fails for every $i\ge 5$~\cite{Nikiforov2015}, while Linz~\cite{Linz2023} ruled out the case $i=4$ by constructing graphs with $\lambda_4(G)>n/4$. Since connectivity plays no role for $i=3$, the only remaining positive case may be stated as follows; see also \cite[Problem~10]{LiuNing2023} and \cite[Question~2.11]{Nikiforov2015}.

\begin{problem}[Nikiforov~\cite{Nikiforov2015}]\label{prob:powers}
Let $G$ be a graph of order $n\geq 3$. Is it true that
\[
\lambda_3(G)\le \Bigl\lfloor \frac{n}{3}\Bigr\rfloor ?
\]
\end{problem}

Recently, Leonida and Li~\cite[Theorem~1.6]{LeonidaLi2026} verified Problem~\ref{prob:powers} for several important classes of graphs, including strongly regular graphs, regular line graphs, and Cayley graphs on abelian groups. See also Li~\cite{Li2025} for a related asymptotic improvement on Nikiforov's bound.

In order to attack Problem~\ref{prob:powers}, Leonida and Li~\cite[Conjecture~4.2]{LeonidaLi2026} formulated a weighted analogue. Let $\mathcal{S}_n$ denote the set of real symmetric $n\times n$ matrices with entries in $[0,1]$.

\begin{conjecture}[Leonida--Li~\cite{LeonidaLi2026}]\label{conj:LL}
One has
\[
\inf_{M\in \mathcal{S}_n}\lambda_{n-1}(M)\ge -\frac{n}{3}.
\]
\end{conjecture}

The main result of this paper is a complete solution to Conjecture~\ref{conj:LL}. In fact, we prove a slightly stronger statement, allowing arbitrary nonnegative diagonal entries.

\begin{theorem}\label{thm:weighted}
Let $n\ge 2$, and let $A=(a_{ij})$ be a real symmetric matrix of order $n$ with eigenvalues
\[
\mu_1\ge \mu_2\ge \cdots \ge \mu_n.
\]
Assume that
\[
0\le a_{ij}\le 1\quad (i\ne j),
\qquad
a_{ii}\ge 0\quad (1\le i\le n).
\]
Then
\[
\mu_{n-1}+\mu_n\ge -\frac{2n}{3}.
\]
In particular, \(\mu_{n-1}\ge -\frac{n}{3}\).
\end{theorem}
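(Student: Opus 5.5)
The plan is to convert the spectral statement into an extremal problem about rank-two projections, and then into a planar geometric inequality. By the Ky Fan minimum principle,
\[
\mu_{n-1}+\mu_n=\min\{x^{T}Ax+y^{T}Ay:\ x,y\in\mathbb{R}^n \text{ orthonormal}\}=\min_{P}\tr(AP),
\]
where $P=xx^{T}+yy^{T}$ ranges over rank-two orthogonal projections. Fix such a $P=(p_{ij})$. Since $a_{ii}\ge 0$ and $p_{ii}\ge 0$, the diagonal contributes nonnegatively. Since $0\le a_{ij}\le 1$ off the diagonal, each off-diagonal term satisfies $a_{ij}p_{ij}\ge -(p_{ij})_-$, where $t_-=\max(0,-t)$. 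This bound is attained by taking $a_{ij}=1$ exactly where $p_{ij}<0$, so no information is lost. The theorem is therefore equivalent to
\[
\sum_{i\ne j}(p_{ij})_-\le \frac{2n}{3}\qquad\text{for every rank-two projection }P .
\]
The ``in particular'' clause then follows from $2\mu_{n-1}\ge\mu_{n-1}+\mu_n$.

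Next, I would pass to planar geometry. Write $v_i=(x_i,y_i)\in\mathbb{R}^2$, so that $p_{ij}=\langle v_i,v_j\rangle$. The projection property is exactly the tight-frame condition $\sum_i v_iv_i^{T}=I_2$. In polar form $v_i=r_ie^{i\theta_i}$, this says $\sum_i r_i^2=2$ and $\sum_i r_i^2e^{2i\theta_i}=0$. The quantity to bound is
\[
F=\sum_{i\ne j}r_ir_j\,f(\theta_i-\theta_j),\qquad f(\varphi)=(-\cos\varphi)_+ .
\]
The extremal configuration should be three clusters of $n/3$ equal vectors at angles $0,2\pi/3,4\pi/3$ with $r_i^2=2/n$. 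There, cross-cluster inner products are $-1/n$, and $F=\tfrac{2n^2}{3}\cdot\tfrac1n=\tfrac{2n}{3}$. This guides the design of a certificate: I want an inequality of the form
\[
f(\varphi)\le a-K(\varphi),
\]
with $K$ a cosine polynomial with nonnegative coefficients, so that $\sum_{i,j}r_ir_jK(\theta_i-\theta_j)\ge 0$. This should be combined with Cauchy--Schwarz, $(\sum r_i)^2\le 2n$, and with the frame identity, used to absorb diagonal and correction terms. The target is $F\le 2an+O(1)$ with $a=1/3$ and the $O(1)$ term nonpositive.

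The main obstacle is exactly this certificate. A naive attempt fails. With $K=c_1\cos\varphi+c_3\cos3\varphi$, equality at $2\pi/3$ forces $K(0)=1/3$, but then $f(\pi)=1$ violates the bound. The kernel method alone, ignoring the frame condition, cannot succeed: a single pair of antipodal vectors shows the inequality is not purely a positive-definiteness fact. I would therefore build the frame condition into the certificate. Concretely, I would use a weighted Cauchy--Schwarz in place of $(\sum r_i)^2\le 2n$: group the $v_i$ into three $120^\circ$ sectors, bound $F$ by cross-sector sums $\sum_{s<t}R_sR_t$ with $R_s=\sum_{i\in s}r_i$, and then use $\sum_i r_i^2e^{2i\theta_i}=0$ to show that the sector masses cannot be simultaneously large and antipodal. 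If this sector argument becomes messy, the fallback is a direct variational attack on the finite-dimensional problem $\max F$. I would show that at a maximizer all $v_i$ lie in at most three directions; this should hold since $F$ is piecewise bilinear and the constraints are quadratic. The problem then reduces to optimizing over three cluster sizes and radii, which is an elementary computation.
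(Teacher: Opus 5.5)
\textbf{There is a genuine gap: the central inequality is never proved.} Your reduction is correct and coincides with the paper's. That includes Ky Fan's principle, the pointwise bound $a_{ij}p_{ij}\ge-(p_{ij})_-$, the tight-frame encoding $\sum_j r_j^2=2$, $\sum_j r_j^2e^{2i\theta_j}=0$, and the three-cluster extremal configuration. But the entire content of the theorem is the inequality $F=\sum_{i\ne j}(p_{ij})_-\le 2n/3$, and you do not prove it.

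The sector route is only described. Its natural intermediate claim, $F\le\sum_{s<t}R_sR_t$, cannot be checked pair by pair: antipodal cross-sector pairs have $f=1$, and same-sector pairs at angles in $(\pi/2,2\pi/3)$ also contribute. So that claim would itself need the frame condition in some unspecified way. The fallback claim, that a maximizer uses at most three directions, rests on a heuristic (``piecewise bilinear, quadratic constraints'') that does not imply it. Establishing it is essentially as hard as the theorem.

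\textbf{The missing idea is how to feed the frame condition into your kernel method.} The two frame identities say exactly that the complex $n\times 2$ matrix with rows $\frac{r_j}{\sqrt2}(1,e^{2i\theta_j})$ has orthonormal columns. Bessel's inequality applied to $\1$ then gives
\[
C^2+|S|^2\le 2n,\qquad C=\sum_j r_j,\quad S=\sum_j r_je^{2i\theta_j}.
\]
This is stronger than the bound $C^2\le 2n$ you planned to use. It allows a certificate with a \emph{positive} $\cos2\varphi$ coefficient, which is precisely what absorbs antipodal pairs; your failed attempt used only odd harmonics.

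The paper's lemma is $|\cos x|\le\frac23+\frac7{18}\cos2x-\frac1{18}\cos4x$; the difference equals $\frac19(1-t)(t+2)(2t-1)^2$ with $t=|\cos x|$. Combined with $(-\cos\varphi)_+=\frac12(|\cos\varphi|-\cos\varphi)$, it gives
\[
(-\cos\varphi)_+\le\tfrac13-\tfrac12\cos\varphi+\tfrac7{36}\cos2\varphi-\tfrac1{36}\cos4\varphi ,
\]
with equality at $\varphi=0,\pi/3,2\pi/3,\pi$. The right side vanishes at $\varphi=0$, so you may sum over all $i,j$. Writing $V=\sum_jr_je^{i\theta_j}$ and $T=\sum_jr_je^{4i\theta_j}$, this yields
\[
F\le\tfrac13C^2-\tfrac12|V|^2+\tfrac7{36}|S|^2-\tfrac1{36}|T|^2\le\tfrac13(C^2+|S|^2)\le\tfrac{2n}3 .
\]
The paper organizes the same computation as $\sum_{i,j}|p_{ij}|\le\frac{4n}3$ together with $\1^TP\1\ge0$. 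Without this Bessel-type bound, or a substitute for it, your plan has no working certificate. As it stands, the proposal is a correct reduction plus a strategy, not a proof.
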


Combining Theorem~\ref{thm:weighted} with the identity \(A(G)+A(\overline G)=J-I\) and Weyl's inequality, we obtain the following graph-theoretic consequence. In particular, it yields a stronger conclusion than Problem~\ref{prob:powers}.

\begin{theorem}\label{thm:main}
Let $G$ be a graph on $n\ge 3$ vertices. Then
\[
\lambda_3(G)\le \frac{n}{3}-1.
\]
\end{theorem}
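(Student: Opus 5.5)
We deduce Theorem~\ref{thm:main} directly from Theorem~\ref{thm:weighted}, applied to the adjacency matrix of the complement, together with the identity $A(G)+A(\overline G)=J-I$. We write $B=A(\overline G)$. Then $B$ is real symmetric with zero diagonal and off-diagonal entries in $\{0,1\}$, so it satisfies the hypotheses of Theorem~\ref{thm:weighted}. Its two smallest eigenvalues therefore satisfy
\[
\lambda_{n-1}(B)+\lambda_n(B)\ge -\frac{2n}{3}.
\]
From $A(G)=J-I-B$ we get $\lambda_i(A(G))=\lambda_i(J-B)-1$ for every $i$. It thus suffices to show $\lambda_3(J-B)\le n/3$.

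The key step is to handle the rank-one perturbation $J$ with an interlacing argument. The naive Weyl bound $\lambda_3(J-B)\le \lambda_1(J)+\lambda_2(-B)=n-\lambda_{n-1}(B)$ is too weak. We use instead $\lambda_{i+j-1}(X+Y)\le\lambda_i(X)+\lambda_j(Y)$ with $X=J$, $Y=-B$, $i=2$, $j=2$, which gives
\[
\lambda_3(J-B)\le \lambda_2(J)+\lambda_2(-B)=0-\lambda_{n-1}(B)=-\lambda_{n-1}(B).
\]
The ``in particular'' clause of Theorem~\ref{thm:weighted} gives $\lambda_{n-1}(B)\ge -n/3$. Hence $\lambda_3(J-B)\le n/3$, and so $\lambda_3(G)\le n/3-1$.

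The hypothesis $n\ge 3$ is needed only so that $\lambda_3$ is defined. The only point requiring care is choosing the right indices in Weyl's inequality: the rank-one matrix $J$ must be absorbed into its zero eigenvalue $\lambda_2(J)=0$ (valid for $n\ge 2$) rather than its top eigenvalue $n$. The substantive difficulty lies entirely in Theorem~\ref{thm:weighted}, which we are allowed to assume. Only its consequence $\lambda_{n-1}\ge -n/3$ is used here; the stronger sum bound is not needed for this corollary.
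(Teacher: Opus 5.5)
Your proof is correct and is essentially the paper's argument. The paper applies Weyl's inequality in the dual form $\lambda_3(G)+\lambda_{n-1}(\overline G)\le\lambda_2(J-I)=-1$, which is your inequality $\lambda_3\bigl((J-I)+(-B)\bigr)\le\lambda_2(J-I)+\lambda_2(-B)$ rearranged, and it then concludes, as you do, with $\lambda_{n-1}(\overline G)\ge -n/3$ from Theorem~\ref{thm:weighted} (stated there as Corollary~\ref{cor:graphlower}).
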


\begin{remark}
The bound in Theorem~\ref{thm:main} is sharp whenever $3\mid n$. Indeed,
\[
\lambda_3\left(K_{n/3}\cup K_{n/3}\cup K_{n/3}\right)=\frac{n}{3}-1.
\]
More generally, Leonida and Li~\cite[Theorem~2.2]{LeonidaLi2026} constructed a family of graphs $H_{a,b}$ of order $3(a+b)$ satisfying
\[
\lambda_3(H_{a,b})=a+b-1=\frac{|V(H_{a,b})|}{3}-1.
\]
These provide nonregular equality examples, and the closed blow-ups of this family still have third-eigenvalue proportion tending to $1/3$.
\end{remark}

The paper is organised as follows. In Section~\ref{sec:weighted} we prove Theorem~\ref{thm:weighted}. In Section~\ref{sec:graph} we deduce Theorem~\ref{thm:main}.

\section{A rank-two projection bound}\label{sec:weighted}

The proof begins with a simple trigonometric majorant.

\begin{lemma}\label{lem:trig}
For every real number $x$,
\[
|\cos x|\le \frac{2}{3}+\frac{7}{18}\cos 2x-\frac{1}{18}\cos 4x.
\]
\end{lemma}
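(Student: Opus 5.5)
Write $c=|\cos x|\in[0,1]$. The plan is to reduce the inequality to a one-variable polynomial inequality in $c$ and then factor it.

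First express the right-hand side in terms of $t=\cos^2 x$. Using
\[
\cos 2x = 2t-1,\qquad \cos 4x = 2\cos^2 2x - 1 = 8t^2-8t+1,
\]
a direct substitution gives
\[
\frac{2}{3}+\frac{7}{18}(2t-1)-\frac{1}{18}(8t^2-8t+1)
= \frac{12 -7 -1 + 14t + 8t -8t^2}{18}
= \frac{4+22t-8t^2}{18}.
\]
With $t=c^2$, the claim therefore becomes
\[
18c \le 4 + 22c^2 - 8c^4, \qquad\text{i.e.}\qquad p(c):=8c^4-22c^2+18c-4\le 0 \quad (0\le c\le 1),
\]
or equivalently $4c^4-11c^2+9c-2\le 0$.

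Next, factor $p$. Since $c=1$ corresponds to the equality case $\cos x=\pm1$, and $c=1/2$ corresponds to $\cos 2x=-1/2$ (where the majorant should be tight for the extremal three-block configuration), we expect $c=\tfrac12$ to be a double root and $c=1$ a simple root. Checking: $4-11+9-2=0$, and at $c=\tfrac12$ we get $\tfrac14-\tfrac{11}{4}+\tfrac92-2=0$. The derivative is $16c^3-22c+9$, which at $c=\tfrac12$ equals $2-11+9=0$, so $\tfrac12$ is indeed a double root. Dividing out gives
\[
4c^4-11c^2+9c-2=(2c-1)^2(c-1)(c+2).
\]
The constant term confirms this: $1\cdot(-1)\cdot 2=-2$.

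Finally, on $[0,1]$ we have $(2c-1)^2\ge 0$, $c-1\le 0$ and $c+2>0$, so the product is $\le 0$, which proves the lemma.

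The only real obstacle is spotting the factorization. Identifying the tangency point $c=\tfrac12$ (which corresponds to $x=\pm\pi/3$, the angle relevant for the $n/3$ extremizers) makes it immediate, since it forces a double root there; the rest is routine.
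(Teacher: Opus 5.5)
Your proof is correct and follows essentially the same route as the paper. You substitute $c=|\cos x|$, rewrite the right-hand side with the double-angle formulas, and factor the difference as a positive multiple of $(2c-1)^2(1-c)(c+2)$; the only difference is that you justify the factorization via the double root at $c=\tfrac12$ and the root at $c=1$, whereas the paper simply states it.
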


\begin{proof}
Set $t=|\cos x|\in[0,1]$. Then
\[
\cos 2x=2t^2-1,
\qquad
\cos 4x=8t^4-8t^2+1.
\]
Hence
\begin{align*}
\frac{2}{3}+\frac{7}{18}\cos 2x-\frac{1}{18}\cos 4x-|\cos x|
&=\frac{2}{3}+\frac{7}{18}(2t^2-1)-\frac{1}{18}(8t^4-8t^2+1)-t\\
&=\frac{(1-t)(t+2)(2t-1)^2}{9}\ge 0.
\end{align*}
This proves the claim.
\end{proof}

\begin{proof}[Proof of Theorem~\ref{thm:weighted}]
Let
\[
\mathcal P_2:=\{Q\in\mathbb R^{n\times n}: Q^2=Q,\ Q^T=Q,\ \rank Q=2\}
\]
be the set of orthogonal projections of rank $2$. By Ky Fan's minimum principle (see, e.g.,~\cite[Corollary~4.3.39]{HornJohnson2012}),
\begin{equation}\label{eq:kyfan}
\mu_{n-1}+\mu_n=\min_{Q\in\mathcal P_2}\tr(AQ).
\end{equation}
Thus it suffices to prove that
\[
\tr(AQ)\ge -\frac{2n}{3}
\]
for every $Q\in\mathcal P_2$.

Fix $Q\in\mathcal P_2$. Write
\[
Q=RR^T,
\qquad
R\in\mathbb R^{n\times 2},
\qquad
R^TR=I_2.
\]
Let the $i$th row of $R$ be $r_i\in\mathbb R^2$, and write \(q_{ij}=r_i\cdot r_j\). Set $c_i=\sqrt{r_{i1}^2+r_{i2}^2}\ge 0$. If $c_i>0$, choose $\theta_i\in\mathbb R$ such that \(r_i=c_i(\cos\theta_i,\sin\theta_i)\), and if $c_i=0$, choose $\theta_i$ arbitrarily. Then
\begin{equation}\label{eq:qij}
q_{ij}=c_ic_j\cos(\theta_i-\theta_j).
\end{equation}

Define complex numbers
\[
z_j:=r_{j1}+ir_{j2}=c_j e^{i\theta_j}.
\]
Since $R^TR=I_2$, we have
\[
\sum_{i=1}^n r_{i1}^2=1,
\qquad
\sum_{i=1}^n r_{i2}^2=1,
\qquad
\sum_{i=1}^n r_{i1}r_{i2}=0.
\]
Therefore,
\begin{equation}\label{eq:frameid}
\sum_{i=1}^n c_i^2=\sum_{i=1}^n |z_i|^2=2,
\qquad
\sum_{i=1}^n c_i^2e^{2i\theta_i}=\sum_{i=1}^n z_i^2=0.
\end{equation}

We now bound the entrywise $\ell_1$-norm of $Q=(q_{ij})$. Set
\[
C:=\sum_{i=1}^n c_i,
\qquad
S:=\sum_{i=1}^n c_ie^{2i\theta_i},
\qquad
T:=\sum_{i=1}^n c_ie^{4i\theta_i}.
\]
By \eqref{eq:qij} and Lemma~\ref{lem:trig},
\begin{align}
\sum_{i,j=1}^n |q_{ij}|
&=\sum_{i,j=1}^n c_ic_j|\cos(\theta_i-\theta_j)|\notag\\
&\le \frac{2}{3}C^2
+\frac{7}{18}\sum_{i,j=1}^n c_ic_j\cos 2(\theta_i-\theta_j)
-\frac{1}{18}\sum_{i,j=1}^n c_ic_j\cos 4(\theta_i-\theta_j).
\label{eq:l1start}
\end{align}
But
\[
\sum_{i,j=1}^n c_ic_je^{2i(\theta_i-\theta_j)}
=\left(\sum_{i=1}^n c_ie^{2i\theta_i}\right)
\overline{\left(\sum_{j=1}^n c_je^{2i\theta_j}\right)}
=|S|^2,
\]
which is real, so
\[
\sum_{i,j=1}^n c_ic_j\cos 2(\theta_i-\theta_j)=|S|^2.
\]
Similarly,
\[
\sum_{i,j=1}^n c_ic_j\cos 4(\theta_i-\theta_j)=|T|^2.
\]
Hence \eqref{eq:l1start} becomes
\begin{equation}\label{eq:l1reduced}
\sum_{i,j=1}^n |q_{ij}|
\le \frac{2}{3}C^2+\frac{7}{18}|S|^2-\frac{1}{18}|T|^2
\le \frac{2}{3}C^2+\frac{7}{18}|S|^2.
\end{equation}

To estimate $C^2+|S|^2$, define $M\in\mathbb C^{n\times 2}$ by declaring its $j$th row to be
\[
m_j:=\frac{c_j}{\sqrt 2}(1,e^{2i\theta_j}).
\]
Let \(M^*\) denote the conjugate transpose of \(M\). Using \eqref{eq:frameid}, we obtain
\[
M^*M
=\frac12
\begin{pmatrix}
\sum_i c_i^2 & \sum_i c_i^2e^{2i\theta_i}\\
\sum_i c_i^2e^{-2i\theta_i} & \sum_i c_i^2
\end{pmatrix}
=I_2.
\]
Hence, for every \(x\in\mathbb C^n\), where \(\|\cdot\|_2\) denotes the
Euclidean norm on complex vectors, we have
\[
x^*MM^*x=(M^*x)^*(M^*x)=\|M^*x\|_2^2\le \|x\|_2^2=x^*I_nx.
\]
Therefore \(MM^*\preceq I_n\), where \(\preceq\) denotes the usual Loewner
order on Hermitian matrices. Let \(\1\in\mathbb R^n\subset \mathbb C^n\) be
the all-ones vector. Then
\[
\|M^*\1\|_2^2=\1^*MM^*\1\le \|\1\|_2^2=n.
\]
On the other hand,
\[
M^*\1=\frac{1}{\sqrt 2}
\binom{\sum_i c_i}{\sum_i c_ie^{-2i\theta_i}},
\]
so
\begin{equation}\label{eq:CSbound}
\frac12(C^2+|S|^2)=\|M^*\1\|_2^2\le n.
\end{equation}
Combining \eqref{eq:l1reduced} with \eqref{eq:CSbound}, we get
\begin{align}
\sum_{i,j=1}^n |q_{ij}|
&\le \frac{2}{3}C^2+\frac{7}{18}|S|^2\notag\\
&\le \frac{2}{3}(C^2+|S|^2)\notag\\
&\le \frac{2}{3}\cdot 2n
=\frac{4n}{3}.
\label{eq:l1final}
\end{align}

Since $Q$ is a projection of rank $2$, we have $\tr Q=2$ and $q_{ii} = \|r_i\|_2^2 \ge 0$ for all $i$. Thus
\begin{equation}\label{eq:offdiagabs}
\sum_{1\le i<j\le n}|q_{ij}|
=\frac12\left(\sum_{i,j=1}^n |q_{ij}|-\sum_{i=1}^n q_{ii}\right)
\le \frac12\left(\frac{4n}{3}-2\right)=\frac{2n}{3}-1.
\end{equation}
Also,
\[
\1^TQ\1=\1^TRR^T\1=\|R^T\1\|_2^2\ge 0.
\]
Thus
\begin{equation}\label{eq:offdiagsum}
\sum_{1\le i<j\le n}q_{ij}
=\frac12(\1^TQ\1-\tr Q)
\ge \frac12(0-2)=-1.
\end{equation}
Hence
\begin{align}
\sum_{1\le i<j\le n}\min(q_{ij},0)
&=\frac12\left(\sum_{i<j}q_{ij}-\sum_{i<j}|q_{ij}|\right)\notag\\
&\ge \frac12\left(-1-\left(\frac{2n}{3}-1\right)\right)
=-\frac{n}{3}.
\label{eq:minneg}
\end{align}

Finally, since $0\le a_{ij}\le 1$ for $i\ne j$, we have
\[
a_{ij}q_{ij}\ge \min(q_{ij},0)
\qquad (i<j),
\]
and because $a_{ii}\ge 0$ and $q_{ii}\ge 0$, we also have $a_{ii}q_{ii}\ge 0$. Therefore,
\begin{align*}
\tr(AQ)
&=\sum_{i=1}^n a_{ii}q_{ii}+2\sum_{1\le i<j\le n}a_{ij}q_{ij}\\
&\ge 2\sum_{1\le i<j\le n}\min(q_{ij},0)\\
&\ge -\frac{2n}{3}
\end{align*}
by \eqref{eq:minneg}. Since $Q\in\mathcal P_2$ was arbitrary, \eqref{eq:kyfan} yields
\[
\mu_{n-1}+\mu_n\ge -\frac{2n}{3}.
\]
Because $\mu_n\le \mu_{n-1}$, we conclude that
\[
\mu_{n-1}\ge \frac{\mu_{n-1}+\mu_n}{2}\ge -\frac{n}{3}.
\]
This completes the proof.
\end{proof}

\begin{corollary}\label{cor:graphlower}
Let $H$ be a graph on $n\geq 2$ vertices. Then
\[
\lambda_{n-1}(H)\ge -\frac{n}{3}.
\]
\end{corollary}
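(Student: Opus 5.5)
This is a direct specialization of Theorem~\ref{thm:weighted}. Take $A=A(H)$, the adjacency matrix of $H$. It is a real symmetric matrix of order $n\ge 2$. Its off-diagonal entries lie in $\{0,1\}\subseteq[0,1]$, and its diagonal entries are all $0\ge 0$, so every hypothesis of Theorem~\ref{thm:weighted} holds.

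The eigenvalues of $A$, listed in nonincreasing order, are exactly $\lambda_1(H)\ge\cdots\ge\lambda_n(H)$. Hence $\mu_{n-1}=\lambda_{n-1}(H)$ and $\mu_n=\lambda_n(H)$. Theorem~\ref{thm:weighted} then gives $\lambda_{n-1}(H)+\lambda_n(H)\ge -2n/3$. Its final assertion, which uses $\mu_n\le\mu_{n-1}$ to deduce $\mu_{n-1}\ge\frac12(\mu_{n-1}+\mu_n)$, yields $\lambda_{n-1}(H)\ge -n/3$.

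There is no real obstacle here. The only point to check is that the ordering convention for $\lambda_i(H)$ matches the convention for $\mu_i$ in Theorem~\ref{thm:weighted}, and both are nonincreasing.
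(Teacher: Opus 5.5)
Your proposal is correct and matches the paper's proof: the paper also applies Theorem~\ref{thm:weighted} to $A(H)$, whose hypotheses clearly hold, and reads off the final assertion $\mu_{n-1}\ge -n/3$. Your check of the hypotheses and of the eigenvalue ordering convention is exactly the verification that one-line argument needs.
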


\begin{proof}
Apply Theorem~\ref{thm:weighted} to the adjacency matrix $A(H)$.
\end{proof}

\section{Deduction of the graph-theoretic theorem}\label{sec:graph}

\begin{proof}[Proof of Theorem~\ref{thm:main}]
Let $\overline G$ be the complement of $G$. Then
\[
A(G)+A(\overline G)=J-I,
\]
where $J$ is the all-ones matrix. The eigenvalues of $J-I$ are $n-1$ and $-1$ with multiplicity $n-1$. In particular,
\[
\lambda_2(J-I)=-1.
\]

We apply Weyl's inequality for Hermitian matrices in the form
\[
\lambda_i(X)+\lambda_j(Y)\le \lambda_{i+j-n}(X+Y)
\qquad (i+j\ge n+1),
\]
see, for example,~\cite[\S 4.3]{HornJohnson2012}. Taking
\[
X=A(G),\qquad Y=A(\overline G),\qquad i=3,\qquad j=n-1,
\]
we obtain
\[
\lambda_3(G)+\lambda_{n-1}(\overline G)\le \lambda_2(J-I)=-1.
\]
By Corollary~\ref{cor:graphlower},
\[
\lambda_{n-1}(\overline G)\ge -\frac{n}{3}.
\]
Hence
\[
\lambda_3(G)\le -1-\lambda_{n-1}(\overline G)\le -1+\frac{n}{3}=\frac{n}{3}-1.
\]
This proves the claimed bound.
\end{proof}

\begin{remark}
Theorem~\ref{thm:main} is sharp for all $n$ divisible by $3$.
Determining the exact maximum of $\lambda_3(G)$ for $3\nmid n$ remains an interesting problem.
\end{remark}

\end{document}